\title{Magic squares with empty cells}
\author {
Abdollah Khodkar and David Leach\\
Department of Mathematics\\
University of West Georgia\\
Carrollton, GA 30118\\
{\tt akhodkar@westga.edu},
{\tt cleach@westga.edu}
}
\date{}
\newtheorem{prelem}{{\bf Theorem}}
 \newtheorem{theorem}{Theorem}
\newtheorem{lemma}[theorem]{Lemma}
\theoremstyle{definition}
\theoremstyle{remark}
\begin{document}

\maketitle

\begin{abstract}
\noindent A {\em $k$-magic square} of order $n$ is an arrangement of the numbers from 0 to $kn-1$ in an $n\times n$ matrix, such that each row and each column has exactly $k$ filled cells, each number occurs exactly once, and the sum of the entries of any row or any column is the same. A magic square is called $k$-{\em diagonal} if its entries all belong to $k$ consecutive diagonals.
In this paper we prove that a $k$-diagonal magic square exists if and only if $n=k=1$ or $3\leq k\leq n$ and
$n$ is odd or $k$ is even.

\end{abstract}

\section{Introduction}\label{SEC1}
A {\em magic square} of order $n$ is an arrangement of the numbers from 0 to $n^2-1$ in an $n\times n$ array such that each number occurs exactly once in the array and the sum of the entries of any row or any column is the same,  which is called the {\em magic sum}. It is well-known that (see \cite {sun}) a magic square exists for each $n\geq 1$.
A {\em $k$-magic square} of order $n$ is an arrangement of the numbers from 0 to $kn-1$ in an $n\times n$ array such that each row and each column has exactly $k$ filled cells, each number occurs exactly once in the array, and the sum of the entries of any row or any column is the same.

An {\em integer Heffter array} $H(m, n; s, t)$ is an $m\times n$ array with entries from
$X=\{\pm1,\pm2,$ $\ldots,\pm ms\}$
such that each row contains $s$ filled cells and each column contains $t$ filled cells,
the elements in every row and column sum to 0 in ${\mathbb Z}$, and
for every $x\in X$, either $x$ or $-x$ appears in the array.
The notion of an integer Heffter array $H(m, n; s, t)$ was first defined in  \cite{arc1}.
Integer Heffter arrays with $m=n$ represent a type of magic square where each number from the set
$\{1,2,\ldots,n^2\}$ is used once up to sign.
A {\em square} integer Heffter array $H(n; k)$ is an integer Heffter array with $m=n$ and there are
exactly $k$ filled cells in each row and each column.
In \cite{ADDY,DW} it is proved that:

\begin{theorem}\label{Heffterwithemptycells}
There is an integer $H(n; k)$ if and only if $3\leq k\leq n$ and $nk\equiv 0,3 \pmod 4$.
\end{theorem}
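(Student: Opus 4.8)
The plan is to treat the two implications separately: necessity reduces to a few counting and parity observations, while sufficiency demands explicit constructions and is where essentially all the work lies.

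For necessity, I would first note that the $k$ filled cells of any row occupy distinct columns, forcing $k \le n$. To see $k \ge 3$, observe that $k = 1$ makes each row a single entry that must itself equal $0 \notin X$, while $k = 2$ makes each row a pair $\{a, b\}$ with $a + b = 0$, hence $b = -a$; but the $nk$ filled cells must represent each of the $nk$ pairs $\{x, -x\}$ exactly once, so no row can contain both $a$ and $-a$. For the congruence, I would add up all $n$ row sums: each is $0$, so the sum $S$ of all entries is $0$. Writing the entries as $\varepsilon_i\, i$ with $\varepsilon_i \in \{\pm 1\}$ and $i$ running over $1, \ldots, nk$, and using $\varepsilon_i\, i \equiv i \pmod 2$, we get $S \equiv \sum_{i=1}^{nk} i = \tfrac{1}{2}nk(nk+1) \pmod 2$. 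Since $S = 0$ is even, $\tfrac{1}{2}nk(nk+1)$ is even, i.e.\ $nk(nk+1) \equiv 0 \pmod 4$, which is exactly the condition $nk \equiv 0$ or $3 \pmod 4$.

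For sufficiency I would exploit the diagonal structure that also underlies the paper's main theorem. The idea is to confine all filled cells to $k$ consecutive broken diagonals, filling cell $(i, i+j)$ (indices mod $n$) for $0 \le j \le k-1$; this automatically places exactly $k$ cells in every row and every column. Writing $a_{i,j}$ for the entry at $(i, i+j)$, the row condition becomes $\sum_{j=0}^{k-1} a_{i,j} = 0$ for each $i$, and the column condition becomes $\sum_{j=0}^{k-1} a_{c-j,\,j} = 0$ for each $c$, while the absolute values $|a_{i,j}|$ must run over $1, \ldots, nk$ bijectively. The problem thus reduces to designing a signed labeling of the index set $\mathbb{Z}_n \times \{0, \ldots, k-1\}$ obeying these two families of vanishing sums.

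To produce such labelings I would argue by the residues of $n$ and $k$, since $nk \equiv 0, 3 \pmod 4$ breaks into genuinely different regimes ($k$ odd with $n \equiv 3 \pmod 4$; $k \equiv 0 \pmod 4$; $k \equiv 2 \pmod 4$ with $n$ even; and so on). In each regime I would pair up the diagonals and place on each adjacent pair a block of consecutive integers whose row contributions cancel in pairs and whose column contributions telescope along the diagonal shift; when $k$ is odd, a single leftover diagonal is handled by a self-balancing pattern. A key device is \emph{shiftability}: a partial labeling in which every row and column uses equally many positive and negative cells can have a constant added to all its absolute values without disturbing the zero sums, so smaller verified blocks can be stacked to cover the full range $1, \ldots, nk$. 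The full (no empty cell) case $k = n$ serves as a base case and is built first. The main obstacle, as in all Heffter-array existence results, is the simultaneous satisfaction of the row and column constraints together with the exact coverage of $\{1, \ldots, nk\}$: a labeling that balances rows easily can fail the shifted column sums, and vice versa. I expect the delicate cases to be $k \equiv 2 \pmod 4$ and the small values $k = 3, 4$, where the telescoping patterns must be adjusted by hand and where the parity condition leaves the least slack; checking that these ad hoc patterns realize every residue class without collision is the crux of the argument.
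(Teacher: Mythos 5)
You should first note that the paper contains no proof of this statement to compare against: Theorem~\ref{Heffterwithemptycells} is quoted as background, with the proof attributed to \cite{ADDY} and \cite{DW}. Judged on its own terms, your necessity direction is complete and correct, and it is the standard argument from the Heffter-array literature: $k\le n$ since the $k$ filled cells of a row lie in distinct columns; $k=1$ fails because a singleton row would have to contain $0\notin X$; $k=2$ fails because a zero-sum pair must be $\{a,-a\}$, while the $nk$ filled cells must realize each of the $nk$ pairs $\{x,-x\}$ exactly once; and summing all row sums gives $0=S\equiv \sum_{i=1}^{nk} i = nk(nk+1)/2 \pmod 2$, which is equivalent to $nk\equiv 0,3\pmod 4$.

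The genuine gap is the entire sufficiency direction, which is where the theorem actually lives. What you offer there is a strategy memo, not a proof: you restrict the support to $k$ consecutive broken diagonals (a sufficient but unjustified reduction --- nothing you write shows such diagonal arrays exist for every admissible pair), you describe a template of paired diagonals with telescoping column sums and shiftable blocks, and you explicitly defer the cases you yourself identify as the crux ($k\equiv 2\pmod 4$ and small $k$) to unspecified ad hoc adjustments. No explicit labeling is exhibited and no case is verified, so the simultaneous satisfaction of the row sums, the shifted column sums, and the exact coverage of $\{1,\ldots,nk\}$ --- the difficulty you correctly name --- is never discharged. Your instincts do align with how the problem was actually solved: \cite{ADDY} builds shiftable partial arrays supported on diagonals for a large family of cases, and \cite{DW} completes the remaining ones; but that construction program fills two substantial papers, and reproducing its outline without any of its constructions leaves the ``if'' half of the theorem unproven.
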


A {\em signed magic array} $SMA(m,n;s,t)$ is an $m \times n$ array with entries from $X$, where
$X=\{0,\pm1,\pm2,\ldots,\pm (ms-1)/2\}$ if $ms$ is odd and $X = \{\pm1,\pm2,\ldots,\pm ms/2\}$ if $ms$ is even,
such that precisely $s$ cells in every row and $t$ cells in every column are filled,
every integer from set $X$ appears exactly once in the array, and
the sum of each row and of each column is zero. The signed magic squares also represent a type of magic square where each number from the set $X$ is used once.
The notion of a signed magic array $SMA(m,n;s,t)$  was first defined in \cite{KSW}.
In the case where $m = n$, the array is called {\em signed magic square}.
The notation $SMS(n;k)$ is used for a signed magic square with $k$ filled cells in each row and
in each column. In \cite{KSW} it is proved that:

\begin{theorem} \label{signedsquare}
There is an $SMS(n;k)$ precisely when $n=k =1$ or $3\leq k\leq n$.
\end{theorem}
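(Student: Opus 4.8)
The plan is to prove both directions, treating necessity quickly and then giving an explicit construction for sufficiency.

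\textbf{Necessity.} That $k\le n$ is immediate, since a single row cannot contain more than $n$ filled cells. If $k=1$, then the one filled cell in each row must by itself sum to $0$, so every filled cell contains $0$; as $0$ may be used at most once this forces $n=1$, and $n=k=1$ is indeed realised by the $1\times 1$ array containing $0$. If $k=2$, I would regard the filled cells as the edges of a bipartite graph joining the $n$ rows to the $n$ columns; this graph is $2$-regular, hence a disjoint union of cycles, each of length at least $4$ (a ``length-$2$ cycle'' would put two cells in the same row and column). Travelling around such a cycle, the row- and column-sum conditions alternately give $x_{i+1}=-x_i$, so the entries read $a,-a,a,-a,\dots$ for a single value $a$; a cycle of length $2\ell$ with $\ell\ge 2$ then uses $|a|$ at least twice, contradicting that each element of $X$ occurs exactly once. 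Thus $k=2$ is impossible for $n\ge 2$, and the stated conditions are necessary.

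\textbf{Sufficiency.} For $3\le k\le n$ I would build $SMS(n;k)$ on a fixed \emph{diagonal support}: fill the cells $(r,c)$ with $c-r\equiv d\pmod n$ for $d\in\{0,1,\dots,k-1\}$. Each row and each column then meets each of these $k$ broken diagonals exactly once, so the support is $k$-regular; the row-sum condition becomes ``the $k$ entries of each row sum to $0$'', and the column-sum condition becomes ``each transversal picking one cell from every diagonal sums to $0$''. The idea is to assemble the array from blocks supported on disjoint sets of diagonals: if $A_1$ lives on a set $D_1$ of $k_1$ diagonals using magnitudes $1,\dots,nk_1/2$ with all its row- and (broken-)diagonal-sums zero, and $A_2$ lives on a disjoint set $D_2$ of $k_2$ diagonals, then their overlay is an $SMS(n;k_1+k_2)$ provided $A_2$ uses the next block of magnitudes $nk_1/2+1,\dots,n(k_1+k_2)/2$ and still has zero line sums. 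I would obtain such an $A_2$ by taking a base construction on $k_2$ diagonals and \emph{shifting all magnitudes by} $nk_1/2$; this shift changes each line sum by $(nk_1/2)$ times the number-of-positives-minus-negatives in that line, so it preserves zero sums exactly when every line of the block is sign-balanced, which is possible precisely for blocks of \emph{even} size $k_2$. Concretely I would establish direct base constructions of $SMS(n;3)$, $SMS(n;4)$, $SMS(n;5)$ on consecutive diagonals (the sizes $3,4,5$ generate every $k\ge 3$ additively), using an odd block of size $3$ or $5$ as the innermost ``seed'' and then appending sign-balanced even blocks of size $4$ or $6$ to reach any target $k$.

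The parity of $nk$ dictates the shape of $X$, so I would organise the construction by the parities of $n$ and $k$. When $nk$ is even the seed and all increments place $\pm$ pairs of magnitudes; when $nk$ is odd (both $n$ and $k$ odd) the element $0$ must appear, and I would place it inside the odd seed block, whose line is genuinely unbalanced and therefore cannot be shifted later.

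\textbf{Main obstacle.} The hard part will be the base constructions together with the magnitude bookkeeping across parity cases. The shift-and-glue lemma only preserves zero sums for sign-balanced lines, so odd-size blocks cannot be moved once placed; consequently the single odd seed must already carry the correct lowest range of magnitudes (and the entry $0$ in the doubly-odd case) while simultaneously having zero row-sums and zero broken-diagonal sums on the full $n\times n$ grid. Verifying that suitable seeds exist for all admissible $n$ (especially the small orders $n\in\{k,k+1\}$ and the doubly-odd case with its forced $0$) is where I expect to need a handful of ad hoc direct arrangements to anchor the recursion; the even-block increments and the range arithmetic should then follow routinely.
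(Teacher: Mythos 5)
You should first note a point of comparison: this paper never proves Theorem \ref{signedsquare} at all --- it is quoted from \cite{KSW} --- so the only in-paper analogue of your plan is the machinery for $k$-diagonal magic squares (the direct constructions of Theorems \ref{ThreefilledCells}--\ref{SixfilledCells} combined via Lemmas \ref{shiftlemma} and \ref{ellplusklemma}), which your sufficiency scheme correctly transplants to the signed setting, with the extra sign-balance condition replacing the paper's ``add $\ell n$'' trick. Your necessity argument is complete and correct: $k\le n$ is trivial, $k=1$ forces the entry $0$ in every row so $n=1$, and the cycle decomposition for $k=2$ with the alternating relation $x_{i+1}=-x_i$ genuinely eliminates that case, since a cycle of length $2\ell\ge 4$ repeats the entry $a$ at least twice.

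The gap is in sufficiency, and it is exactly where you flag it --- which makes the proposal a strategy, not a proof. The entire constructive content of the theorem sits in the seed and block arrays you defer: you never exhibit an $SMS(n;3)$, $SMS(n;4)$, or $SMS(n;5)$ on consecutive diagonals, and for the even blocks you need something strictly stronger than an $SMS$, namely that \emph{every} row and \emph{every} column be sign-balanced (two positive and two negative entries in the size-$4$ case), since otherwise the magnitude shift by $nk_1/2$ destroys the zero line sums. Your phrase ``which is possible precisely for blocks of even size'' conceals an unproven existence claim: parity makes balance \emph{necessary} for even blocks, but nothing in the proposal makes it \emph{sufficient}, and balanced blocks using each magnitude exactly once are nontrivial designs --- note how much case-by-case arithmetic the unsigned constructions of Section \ref{SEC2} already require with no sign constraint at all. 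Likewise the doubly-odd seed carrying $0$ and the magnitudes $1,\dots,(3n-1)/2$ must be built for every odd $n\ge 3$, including the tight case $n=k$ where there is no room to maneuver. Two smaller slips: the column-sum condition on diagonal support is only that each of the $n$ columns sums to $0$ --- your restatement ``each transversal picking one cell from every diagonal sums to $0$'' would impose $n!$-many conditions and is false as stated; and the gluing step needs an explicit signed analogue of Lemma \ref{shiftlemma}(i) (with attention to its $2k\le n$ hypothesis) to make the diagonal supports disjoint and consecutive. Until the base constructions are written down and verified, the theorem is not proved.
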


A magic square is called $k$-{\em diagonal} if its entries all belong to $k$ consecutive diagonals (this includes broken diagonals as well). In this paper, similar to Theorems \ref {Heffterwithemptycells} and
 \ref {signedsquare}, we prove that a $k$-diagonal magic square exists if and only if
$n=k=1$ or $3\leq k\leq n$ and $n$ is odd or $k$ is even.

Often the definition of a magic square requires the additional condition that the sum of the diagonals are also constant and equal to the sum of the rows and columns. Due to the structure of a $k$-diagonal magic square as given in this paper, this requirement is not well defined, as the number of entries is either $n$ or $k$. Thus we do not require this condition.

\section{Direct constructions}\label{SEC2}
In this section we present direct constructions for $k$-diagonal
magic squares of order $n$ for $k=3,4,5$ and $6$.
It is easy to see that if there exists a magic square of order $n$ with precisely $k$ filled cells in
each row and each column, then the magic sum must be $k(kn-1)/2$. Hence, if $k$ is odd, then $n$ must be odd.
We use the notation $(i,j;e)$ for an element of an array $M$ to indicate that the entry $e$ is in row $i$ and column $j$ of $M$.

\begin{theorem}\label{ThreefilledCells}
{\rm There exists a $3$-diagonal magic square of order $n\geq 3$ and $n$ is odd.}
\end{theorem}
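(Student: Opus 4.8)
The goal is to construct a 3-diagonal magic square of odd order $n$.The plan is to place the square on three consecutive broken diagonals and then reduce the magic condition to a clean statement about three permutations of $\{0,1,\dots,n-1\}$. First I would declare the filled cells to be $(i,\;i+r \bmod n)$ for $r\in\{0,1,2\}$ and $i\in\mathbb{Z}_n$. Then row $i$ meets exactly the columns $i,i+1,i+2$ and column $j$ meets exactly the rows $j,j-1,j-2$, so each row and each column has precisely three filled cells. Since $3$ is odd and $n$ is odd, the forced magic sum $S=\tfrac{3(3n-1)}{2}$ is an integer, and I record the useful splitting $S=3n+\tfrac{3(n-1)}{2}$.

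Next I would distribute the numbers in three blocks, putting $\{0,\dots,n-1\}$ on diagonal $r=0$, $\{n,\dots,2n-1\}$ on $r=1$, and $\{2n,\dots,3n-1\}$ on $r=2$; write the diagonal entries in row $i$ as $a_i$, $n+m_i$, $2n+h_i$, where $a,m,h$ are permutations of $\{0,\dots,n-1\}$ still to be chosen. A row sum is then $3n+(a_i+m_i+h_i)$ and a column sum is $3n+(a_j+m_{j-1}+h_{j-2})$, so the entire problem reduces to producing permutations $a,m,h$ of $\{0,\dots,n-1\}$ satisfying
\[
a_i+m_i+h_i=T \qquad\text{and}\qquad a_i+m_{i-1}+h_{i-2}=T \qquad (i\in\mathbb{Z}_n),
\]
where $T=\tfrac{3(n-1)}{2}$ and all indices are read modulo $n$.

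For the explicit solution I would put $s=\tfrac{n-1}{2}$ and take
\[
a_i=s(i+1)\bmod n,\qquad m_i=i,\qquad h_i=s(i+2)\bmod n,
\]
each value being its representative in $\{0,\dots,n-1\}$. Because $n$ is odd, $2$ is invertible modulo $n$; as $2s\equiv-1$, the slope $s$ is a unit, so $a$ and $h$ are genuine permutations (and $m$ obviously is). Note for the column identity that $h_{i-2}=s\,i\bmod n$, which is exactly what pairs with $a_i=s(i+1)\bmod n$ and $m_{i-1}$.

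The only delicate point, and the step I expect to be the main obstacle, is that both identities must hold as equalities of \emph{integers}, not merely modulo $n$: three representatives in $\{0,\dots,n-1\}$ whose residues sum to $T$ could a priori have integer sum $T-n$, $T$, or $T+n$, and I must exclude the outer two. I would dispatch this by a short case analysis on the parity of $i$, which governs the modular reduction of $s(i+1)$ and $s(i+2)$: writing $i=2p$ or $i=2p+1$, the reduced values come out as $s-p$, $n-p$, and so on, and the linear terms in $p$ cancel (for instance $(s-p)+2p+(n-p-1)=3s=T$ when $i$ is even), leaving exactly $T$ in every case, and similarly for the column identity after treating $i=0$ separately. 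Finally I would display the resulting array---row $i$ carrying $s(i+1)\bmod n$ in column $i$, $n+i$ in column $i+1$, and $2n+(s(i+2)\bmod n)$ in column $i+2$---and remark that the three blocks together exhaust $\{0,\dots,3n-1\}$, which completes the construction.
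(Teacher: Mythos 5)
Your construction is correct and is essentially the paper's own: the three sequences you choose ($a_i$ and $h_i$ with slope $s=(n-1)/2$, and the linear $m_i=i$) are exactly the permutations the paper places on its three diagonals, merely moved to offsets $0,1,2$ and with the blocks $\{n,\dots,2n-1\}$ and $\{2n,\dots,3n-1\}$ interchanged between two of the diagonals. Your reduction to permutation identities and the parity case analysis that pins down the exact (integer, not just modular) values is the same verification the paper carries out directly on its explicit cell formulas.
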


\begin{proof}
We fill three consecutive diagonals of an empty array $M$ of order $n$ as follows:

\noindent Diagonal 1: $ \left\{\begin{array}{lll}
(2i,n+2i-3;(n-2i-1)/2)& \mbox {for}& 0\leq i\leq (n-1)/2,\\
(2i+1,n+2i-2; n-i-1) & \mbox {for}& 0\leq i\leq(n-3)/2.\\
\end{array}\right.$

\noindent Diagonal 2: $\begin{array}{lll}
(i,n+i-2;2n+i)& \mbox {for}& 0\leq i\leq n-1.\\
\end{array}$

\noindent Diagonal 3: $\left\{ \begin{array}{lll}
(2i,n+2i-1;2n-i-1)& \mbox {for}& 0\leq i\leq (n-1)/2,\\
(2i+1,2i; (3n-2i-3)/2 & \mbox {for}& 0\leq i\leq(n-3)/2.\\
\end{array}\right.$

\noindent
Addition in the first and second components of each triple $(i,j;e)$ is done modulo $n$.
(See Figure \ref{9.3} for a 3-diagonal magic square of order 9.)
We now prove that $M$ is an $(n;3)$ magic square. First
note that the numbers in Diagonal 1 are $0, 1, 2, \ldots, n-1$,
in Diagonal 2 are $2n, 2n+1, \ldots, 3n-1$, and in Diagonal 3 are $n, n+1,\ldots, 2n-1$.

Second, we calculate the row sums and column sums.
In row $2i$, where $0\leq i\leq(n-1)/2$, we have $(n-2i-1)/2+(2n+2i)+(2n-i-1) =3(3n-1)/2$ and in row $2i+1$, where $0\leq i\leq (n-3)/2$, we have
$(n-i-1)+(2n+2i+1)+(3n-2i-3)/2 =3(3n-1)/2$.

The column sum for column $2j$, $0\leq j\leq (n-3)/2$, is
$((3n-2j-3)/2)+(2n+2j+2)+(n-j-2)=3(3n-1)/2$. The column sum for column
$2j+1$, $0\leq j\leq (n-5)/2$, is $(2n-j-2)+(2n+2j+3)+((n-2j-5)/2)=3(3n-1)/2.$

Finally, the column sum for column $n-2$ is $((3n-1)/2)+(n-1)+(2n)=3(3n-1)/2$ and
for column $n-1$ is $((n-3)/2)+(2n+1)+(2n-1)=3(3n-1)/2.$
 This completes the proof.
\end{proof}

\begin{figure}[ht]
$$\begin{array}{|c|c|c|c|c|c|c|c|c|}\hline
 & & & & & &4&18&17 \\ \hline
12& & & & & & &8&19 \\ \hline
20&16& & & & & & &3 \\ \hline
7&21&11& & & & & &  \\ \hline
 &2&22&15& & & & &  \\ \hline
 & &6&23&10& & & &  \\ \hline
 & & &1&24&14& & &  \\ \hline
 & & & &5&25&9& &  \\ \hline
 & & & & &0&26&13&  \\ \hline
 \end{array}$$
 \caption{A 3-diagonal magic square of order 9}
 \label{9.3}
\end{figure}

\begin{theorem}\label{FourfilledCells}
{\rm There exists a $4$-diagonal magic square of order $n\geq 4$.}
\end{theorem}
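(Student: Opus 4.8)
The plan is to give an explicit placement of the numbers $0,1,\ldots,4n-1$ on four consecutive (broken) diagonals of an $n\times n$ array, exactly in the spirit of the proof of Theorem~\ref{ThreefilledCells}. Since the four diagonals are consecutive and each broken diagonal meets every row and every column exactly once, filling all $4n$ of their cells automatically places exactly four entries in each row and each column; so the only remaining conditions to force are that each value in $\{0,\ldots,4n-1\}$ is used once and that every row and column attains the magic sum, which here equals $k(kn-1)/2 = 8n-2$. Because $k=4$ is even, both parities of $n\geq 4$ are permitted, and the construction must cover $n$ even and $n$ odd.

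For the placement I would index the diagonals $D_1,D_2,D_3,D_4$ and put a shifted arithmetic progression in the row index $i$ on each, choosing the slopes to be $+1,-1,-1,+1$ respectively; for instance $f_1(i)=i$ on $D_1$, $f_2(i)=2n-1-i$ on $D_2$, $f_3(i)=3n-1-i$ on $D_3$, and $f_4(i)=3n+i$ on $D_4$. Each diagonal then carries one of the blocks $\{0,\ldots,n-1\}$, $\{n,\ldots,2n-1\}$, $\{2n,\ldots,3n-1\}$, $\{3n,\ldots,4n-1\}$, so the value set is correct by inspection. The slope pattern is chosen so that within each row the four intercepts sum to $8n-2$ while the $\pm i$ terms cancel, making every row sum equal $8n-2$ identically; and for a generic column the four row-indices read off the diagonals are $i_1,i_1-1,i_1-2,i_1-3$, so the $\pm i$ contributions again telescope to $0$ and the column sum is $8n-2$.

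The hard part is the boundary behaviour of the columns. Because the row index on $D_\ell$ is taken modulo $n$, near the wrap-around a couple of the four indices jump by $\pm n$, and a direct check shows that with the naive progression two exceptional columns come out wrong by exactly $\pm n$ (their errors cancelling, consistent with the overall sum being correct). To repair this I would break the assignment on the offending diagonals into two interleaved progressions according to the parity of $i$ --- precisely the device used for Diagonals~1 and~3 in the proof of Theorem~\ref{ThreefilledCells} --- so that the wrap-around correction is spread evenly and every column, including the boundary ones, sums to $8n-2$. The parity split and the list of special columns needing separate verification will differ for $n$ even and $n$ odd, so I expect two parallel cases.

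Finally I would conclude exactly as in the $k=3$ proof: state the explicit triples $(i,j;e)$ (with the first two coordinates read modulo $n$) for each of the four diagonals and each parity class of $i$, confirm that the listed values form a permutation of $0,\ldots,4n-1$, and then verify the magic sum by computing the row sums (constant by design) and the column sums case by case on the parity of the column, together with the finitely many boundary columns. The bulk of the remaining effort is this routine but bookkeeping-heavy column verification, and forcing the boundary columns to balance is the one genuine obstacle.
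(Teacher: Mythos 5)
Your setup and your diagnosis of the difficulty are both correct: with all four diagonals parameterized by the row index $i$, slopes $+1,-1,-1,+1$ and value blocks as you choose them, every row sums to $8n-2$, and exactly two columns (columns $1$ and $3$) fail, by $-n$ and $+n$ respectively. But the proof lives entirely in the repair of those two columns, and the repair you propose does not go through as described. First, note that the failure is structural, not an unlucky choice: in your framework the wrap-around error of column $c$ is $n\sum_{\ell>c}s_\ell$, so column $1$ has error $n(s_2+s_3+s_4)$, and a sum of three terms from $\{+1,-1\}$ is odd and can never vanish. Hence no reassignment of slopes or blocks within your scheme works; a genuinely different device is required. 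Second, the device you reach for --- the parity split of Theorem \ref{ThreefilledCells} --- produces interleaved progressions of effective slope $\pm 1/2$ whose entries are expressions like $(n-2i-1)/2$; this intrinsically requires $n$ odd. That is fatal here, because Theorem \ref{FourfilledCells} is precisely the workhorse for \emph{even} $n$ in the Main Theorem (via Lemma \ref{kdiagonal.n.even}): the even case cannot be deferred as ``a parallel case,'' and you give no indication of what replaces the parity trick there.

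The paper's actual fix is different and removes the boundary problem entirely: keep slopes $\pm 1$ but \emph{re-anchor} two of the diagonals. Its four diagonals are $(i,i+1;i)$, $(i,i+2;4n-i-1)$, $(n+i-2,i+1;3n-i-1)$, $(n+i-2,i+2;n+i)$. The effect is a pairing: diagonals $1,2$ share the row parameterization with opposite slopes, as do diagonals $3,4$; column-wise, diagonals $1,3$ share the column parameterization with opposite slopes, as do $2,4$. Consequently each row receives $r+(4n-1-r)=4n-1$ from the first pair and $\bigl(3n-1-x\bigr)+\bigl(n+x\bigr)=4n-1$ from the second (where $x=(r+2)\bmod n$), and each column receives $3n-1$ from the pair $1,3$ and $5n-1$ from the pair $2,4$ --- identically $8n-2$ in all cases, with every wrap-around error cancelling inside its pair. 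This works uniformly for all $n\ge 4$, even or odd, with no exceptional columns and no parity analysis. Your proposal, as it stands, is missing exactly this idea (or a worked-out substitute for it), so it has a genuine gap at the step you yourself flag as the one obstacle.
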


\begin{proof}
We fill four consecutive diagonals of an empty array $M$ of order $n$ as follows:

\noindent Diagonal 1: $ \begin{array}{lll}
(i,i+1;i) & \mbox{for} & 0\leq i\leq n-1.
\end{array}$

\noindent Diagonal 2: $ \begin{array}{lll}
(i,i+2;4n-i-1) & \mbox{for} & 0\leq i\leq n-1.
\end{array}$

\noindent Diagonal 3: $ \begin{array}{lll}
(n+i-2,i+1;3n-i-1) & \mbox{for} & 0\leq i\leq n-1.
\end{array}$

\noindent Diagonal 4: $ \begin{array}{lll}
(n+i-2,i+2;n+i) & \mbox{for} & 0\leq i\leq n-1.
\end{array}$

\noindent
Addition in first and second components of each triple $(i,j;e)$ is done modulo $n$.
(See Figure \ref{9.4} for a 4-diagonal magic square of order 9.)
We now prove the resulting square is magic. First
note that the numbers in Diagonal 1 are $0, 1, 2, \ldots, n-1$,
in Diagonal 2 are $3n, 3n+1, \ldots, 4n-1$, in Diagonal 3 are $2n, 2n+1,\ldots, 3n-1$ and in diagonal 4 are
$n, n+1, \ldots, 2n-1$.

Second, we calculate the row sums and column sums. In row $0\leq i\leq n-3$ we have
$i+(4n-i-1)+(3n-i-3)+(n+i+2)=8n-2$. In column $2\leq j\leq n-1$ we have $(j-1)+(4n-j+1)+(3n-j)+(n+j-2)=8n-2$.
Row sum for row $n-1$ is $(n-1)+(3n)+(3n-2)+(n+1)=8n-2$ and for row $n-2$ is $(n-2)+(3n+1)+(3n-1)+(n)=8n-2$.
Column sum for column zero is $(n-1)+(3n+1)+(2n)+(2n-2)=8n-2$ and for column 1 is
$0+(3n)+(3n-1)+(2n-1)=8n-2$.
This completes the proof.
\end{proof}

 \begin{figure}[ht]
 $$\begin{array}{|c|c|c|c|c|c|c|c|c|}\hline
  &0&35&24&11& & & & \\ \hline
 & &1&34&23&12& & & \\ \hline
 & & &2&33&22&13& & \\ \hline
 & & & &3&32&21&14& \\ \hline
 & & & & &4&31&20&15\\ \hline
16& & & & & &5&30&19\\ \hline
18&17& & & & & &6&29\\ \hline
28&26&9& & & & & &7\\ \hline
8&27&25&10& & & & & \\ \hline
\end{array}\\$$
 \caption{A 4-diagonal magic square of order 9}
 \label{9.4}
\end{figure}

\begin{theorem}\label{FivefilledCells}
{\rm There exists a $5$-diagonal magic square of order $n\geq 5$, where $n$ is odd.}
\end{theorem}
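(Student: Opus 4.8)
The plan is to follow the same explicit-construction-and-verification template used for $k=3$ and $k=4$ in Theorems~\ref{ThreefilledCells} and~\ref{FourfilledCells}. Concretely, I would fill five consecutive diagonals of an empty $n\times n$ array $M$ with closed-form triples $(i,j;e)$, with all index arithmetic taken modulo $n$, partitioning the value set $\{0,1,\ldots,5n-1\}$ into five blocks of size $n$ and assigning one block to each diagonal (so that, as in the earlier proofs, Diagonal~1 receives $\{0,\ldots,n-1\}$ and the remaining diagonals receive the blocks $\{n,\ldots,2n-1\},\ldots,\{4n,\ldots,5n-1\}$ in some order). Once the fill is written down, the verification reduces to three routine checks: that each value in $\{0,\ldots,5n-1\}$ is used exactly once, that every row sum equals the magic sum $5(5n-1)/2$, and that every column sum equals $5(5n-1)/2$. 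Note that this magic sum is an integer exactly when $n$ is odd, which is consistent with the necessity argument in Section~\ref{SEC2} that $k$ odd forces $n$ odd.

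Because $k=5$ is odd, I do not expect a fully symmetric pairing as in the $k=4$ case; instead I would adapt the parity-split mechanism from the $k=3$ construction. The guiding idea is to place a ``central'' diagonal carrying a complete arithmetic progression of the form $(i,\cdot\,;cn+i)$, whose contribution to the sum of row $i$ grows linearly in $i$, and then to design the four surrounding diagonals, split into even-row and odd-row cases, so that their combined contribution to row $i$ decreases linearly in $i$ at exactly the matching rate. This is precisely the telescoping that makes the $k=3$ row sums collapse to $3(3n-1)/2$, and the same bookkeeping should force the five entries of each row to sum to $5(5n-1)/2$. The even/odd case division is what lets the two outer pairs absorb both the half-integer shifts (terms such as $(n-2i-1)/2$) and the linear growth of the central diagonal while keeping all $5n$ values distinct.

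I expect the main obstacle to be the column sums rather than the row sums. Within a row, all five filled cells sit at fixed diagonal offsets, so their values are immediate functions of the row index and the telescoping is transparent; but because each diagonal is shifted, a fixed column collects its five entries from five \emph{different} rows, and under the parity split those rows have mixed parities, so the algebra no longer factors as cleanly. In addition, the modular wraparound in the column index will produce a handful of boundary columns (analogous to the separately treated columns $n-2$ and $n-1$ in Theorem~\ref{ThreefilledCells}, and columns $0,1$ in Theorem~\ref{FourfilledCells}) where the generic formula does not apply and the sum must be checked by hand. Getting the offsets and the even/odd value assignments tuned so that both the generic column identity and every boundary column simultaneously yield $5(5n-1)/2$ is the delicate part; once a consistent choice is found, the remaining computations are routine.
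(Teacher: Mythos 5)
Your proposal correctly identifies the template the paper uses (explicit closed-form triples on five consecutive diagonals, indices mod $n$, value set partitioned into five blocks of size $n$, parity splits as in the $k=3$ case, separate treatment of boundary rows/columns), but it stops exactly where the proof has to begin: you never write down the construction. For an existence theorem proved by explicit construction, the formulas \emph{are} the mathematical content; everything you describe around them --- the telescoping of row sums, the block structure of the values, the wraparound columns --- is verification scaffolding that cannot be carried out until a specific fill is on the table. Your own closing sentence, ``once a consistent choice is found, the remaining computations are routine,'' concedes the point: finding that consistent choice is the theorem, and nothing in the proposal guarantees one exists, let alone exhibits it.

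Moreover, your structural guess is not quite what works. You propose a single full ``central'' diagonal of the form $(i,\cdot\,;cn+i)$ with the four surrounding diagonals all parity-split. The paper's construction instead uses \emph{three} full arithmetic-progression diagonals --- Diagonal 1 with $(i,i+1;i)$, Diagonal 3 with $(i,i+3;3n-i-1)$, and Diagonal 5 with $(n+i-2,i+3;4n+i)$ --- and only \emph{two} parity-split diagonals (Diagonals 2 and 4, e.g.\ Diagonal 2 is $(n+2i-1,2i+1;2n-i-1)$ for $0\leq i\leq (n-1)/2$ together with $(2i,2i+2;(3n-3)/2-i)$ for $0\leq i\leq (n-3)/2$). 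Note also that two of the full diagonals run \emph{decreasing} in $i$ ($3n-i-1$) and that some diagonals are anchored at row $i$ while others are anchored at rows $n+i-1$ or $n+i-2$; this asymmetric anchoring is what makes both the row and the column identities close up. The discrepancy between your sketch and the working construction is itself evidence that the tuning is genuinely delicate and cannot be waved through: a referee would (rightly) insist on seeing the five diagonals written out and the sums checked, including the half-integer bookkeeping with terms like $(3n-3)/2-i$ and $(7n-3)/2-i$ that only balance because $n$ is odd.
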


\begin{proof}
We fill five consecutive diagonals of an empty array $M$ of order $n$ as follows:

\noindent Diagonal 1: $ \begin{array}{lll}
(i,i+1;i) & \mbox{for} & 0\leq i\leq n-1.
\end{array}$

\noindent Diagonal 2: $\left\{ \begin{array}{lll}
(n+2i-1,2i+1;2n-i-1) & \mbox{for} & 0\leq i\leq (n-1)/2\\
(2i,2i+2;(3n-3)/2-i) & \mbox{for} & 0\leq i\leq (n-3)/2.\\
\end{array}\right.$

\noindent Diagonal 3: $\begin{array}{lll}
(i,i+3;3n-i-1) & \mbox{for} & 0\leq i\leq n-1.
\end{array}$

\noindent Diagonal 4: $\left\{\begin{array}{lll}
(n+2i-2,2i+2;4n-i-1) & \mbox{for} & 0\leq i\leq (n-1)/2\\
(n+2i-1,2i+3;(7n-3)/2-i) & \mbox{for} & 0\leq i\leq (n-3)/2.\\
\end{array}\right.$

\noindent Diagonal 5: $\begin{array}{lll}
(n+i-2,i+3;4n+i) & \mbox{for} & 0\leq i\leq n-1.
\end{array}$

\noindent
Addition in first and second components of each triple $(i,j;e)$ is modulo $n$.
(See Figure \ref{11.5} for a 5-diagonal magic square of order 9.)

Note that the entries in diagonal $d$, $1\leq d\leq 5$, are $\{n(d-1),n(d-1)+1,\ldots,nd-1\}$.

The row sum for row $2i$, where $0\leq i \leq (n-3)/2$, is
$$2i+(3n-3)/2-i+3n-2i-1+4n-i-2+4n+2i+2=5(5n-1)/2.$$

\noindent The row sum for row $2i+1$, where $0\leq i \leq (n-5)/2$, is
$$2i+1+2n-i-2+3n-2i-2+(7n-3)/2-i-1+4n+2i+3=5(5n-1)/2.$$
Finally, the row sum for row $n-1$ is $n-1+2n-1+2n+(7n-3)/2+4n+1=5(5n-1)/2$ and for row $n-2$ is $n-2+(3n-1)/2+2n+1+4n-1+4n=5(5n-1)/2$.

We now calculate the column sums.
The column sum for column $2j$, where $1\leq j\leq (n-1)/2$, is
$$2j-1+(3n-3)/2-j+1+3n-2j+2+4n-j+4n+2j-3=5(5n-1)/2.$$
The column sum for column $2j+1$, where $1\leq j\leq (n-3)/2$, is
$$2j+2n-j-1+3n-2j+1+(7n-3)/2-j+1+4n+2j-2=5(5n-1)/2.$$

The sum for column zero is $n-1+(3n-1)/2+2n+2+3n+5n-3=5(5n-1)/2$,
for column 1 is $0+2n-1+2n+1+(7n-1)/2+5n-2=5(5n-1)/2$ and
for column 2 is $1+(3n-3)/2+2n+4n-1+5n-1=5(5n-1)/2.$
This completes the proof.
\end{proof}

\vspace{15mm}

\begin{figure}[ht]
$$\begin{array}{|c|c|c|c|c|c|c|c|c|c|c|}
\hline
	&	0	&	15	&	32	&	42	&	46	&		&		&		&		&		\\ \hline
	&		&	1	&	20	&	31	&	36	&	47	&		&		&		&		\\ \hline
	&		&		&	2	&	14	&	30	&	41	&	48	&		&		&		\\ \hline
	&		&		&		&	3	&	19	&	29	&	35	&	49	&		&		\\ \hline
	&		&		&		&		&	4	&	13	&	28	&	40	&	50	&		\\ \hline
	&		&		&		&		&		&	5	&	18	&	27	&	34	&	51	\\ \hline
52	&		&		&		&		&		&		&	6	&	12	&	26	&	39	\\ \hline
33	&	53	&		&		&		&		&		&		&	7	&	17	&	25	\\ \hline
24	&	38	&	54	&		&		&		&		&		&		&	8	&	11	\\ \hline
16	&	23	&	43	&	44	&		&		&		&		&		&		&	9	\\ \hline
10	&	21	&	22	&	37	&	45	&		&		&		&		&		&		\\ \hline

\end{array}$$
\caption{A 5-diagonal magic square of order 11}
\label{11.5}
\end{figure}

\begin{theorem}\label{SixfilledCells}
{\rm There exists a $6$-diagonal magic square of order $n\geq 6$.
}
\end{theorem}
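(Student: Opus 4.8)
The plan is to follow the direct-construction template of Theorems~\ref{ThreefilledCells}--\ref{FivefilledCells}: fill six consecutive (broken) diagonals of an empty $n\times n$ array $M$, arrange the values so that diagonal $d$ receives exactly the block $\{(d-1)n,(d-1)n+1,\ldots,dn-1\}$ for $1\le d\le 6$ (so that every integer from $0$ to $6n-1$ is used once), and then verify that every row and every column sums to the forced magic value $6(6n-1)/2=18n-3$. Since each diagonal meets every row once and every column once, each line will collect exactly one entry from each of the six blocks. Because $k=6$ is even, the parity argument of Section~\ref{SEC2} imposes no restriction, so the target is a construction valid for all $n\ge 6$.

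For the placement I would imitate the even case $k=4$ most closely. There two diagonals are indexed by rows $i$ and two by the shifted rows $n+i-2$, and within each pair one value-block is loaded in increasing order while its partner is loaded in decreasing order, so that the terms linear in $i$ (for rows) and in $j$ (for columns) cancel and leave a constant. For $k=6$ I would split the six diagonals into three such increasing/decreasing pairs, anchoring some to rows $i$ and others to shifted rows $n+i-c$, and choose the three offsets together with the three pairings so that in a generic row and a generic column the six selected entries telescope to $18n-3$.

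The main obstacle, exactly as in the earlier proofs, will be the boundary rows and columns. Since all coordinates are reduced modulo $n$, the clean ``generic'' row/column sum holds only away from the wrap-around, and a bounded number of edge rows and columns (the analogues of rows $n-1,n-2$ and columns $0,1,2$ in the $k=4$ and $k=5$ arguments) must be verified by hand; the delicate point is to pin down the value-assignment formulas so that these edge cases also land on $18n-3$ while each block remains a run of $n$ consecutive integers and no value is repeated. I expect that forcing one family of formulas to serve both parities of $n$ may require a mild even/odd-$i$ case split in the indexing, as already occurred for $k=3$ and $k=5$.

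As an independent check, and as an alternative route that settles the odd orders at once, note that two disjoint $3$-diagonal magic squares can be superposed. By Theorem~\ref{ThreefilledCells} take one on three consecutive diagonals, cyclically translate a second copy (a column shift, which permutes the columns and hence preserves both the constant row sums and the constant column sums) so that its three diagonals become the next three consecutive diagonals, and add $3n$ to every entry of the translated copy. Each row and column then has $3+3=6$ filled cells, the integers $0,\ldots,6n-1$ each appear once, and every line sum equals $(9n-3)/2+\big((9n-3)/2+3\cdot 3n\big)=18n-3$. This disposes of all odd $n\ge 7$ immediately, leaving only the even orders to be handled by the direct construction above.
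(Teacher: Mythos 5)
Your proposal leaves a genuine gap exactly where the theorem is hardest: for even $n$ it never produces a square. Everything there rests on ``the direct construction above,'' but that construction is only a template --- three increasing/decreasing pairs, each diagonal carrying a consecutive block $\{(d-1)n,\ldots,dn-1\}$ --- with the offsets, the value formulas, and all boundary checks left open, and the template as stated cannot be completed. Try its natural instantiation, modelled on $k=4$: diagonals $(i,i+c_d;(2d-2)n+i)$ paired with $(n-1+i,i+c_d;2dn-1-i)$ for $d=1,2,3$. The linear terms telescope, but each decreasing diagonal's wrap-around deposits an excess of $+n$ into row $n-1$, so that row exceeds the generic rows by $3n$; the row sums do not depend on the column offsets at all, so no choice of offsets repairs this, and re-anchoring the decreasing diagonals at other shifted rows only relocates the three $+n$ excesses, which cannot be spread uniformly over $n\geq 6$ rows. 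Anchoring all six diagonals at rows $i$ instead makes every row sum constant, but then the same defect reappears in the columns: each diagonal contributes a $\pm n$ wrap correction to the columns below its parametrization constant, these six thresholds are six distinct consecutive integers, and six step functions with distinct jump points cannot sum to a constant. This is precisely why the paper abandons your requirement that every diagonal carry a run of consecutive integers: its diagonals 1--4 carry the blocks $\{0,\ldots,n-1\},\ldots,\{3n,\ldots,4n-1\}$ as you propose, but the top block $\{4n,\ldots,6n-1\}$ is split by parity, with diagonal 5 given by $(i,i+5;6n-2-2i)$ and diagonal 6 by $(n-1+i,i+5;4n+1+2i)$. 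The slopes $\mp 2$ are the missing idea: the $-2n$ wrap correction of diagonal 6 cancels the two $+n$ corrections of diagonals 2 and 4 in row $n-1$, while diagonals 5 and 6 cancel each other's $\mp 2n$ corrections in columns $0,\ldots,4$, giving every line the sum $18n-3$ for all $n\geq 6$.

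Your superposition argument for odd $n$ is correct, and it is a legitimately different route for that subcase: it is exactly the paper's Lemma \ref{ellplusklemma} with $\ell=m=3$ (the hypothesis $\ell+m\leq n$ holds since odd $n\geq 6$ means $n\geq 7$), which the paper itself only deploys later for general $k$. But note which case this buys you: in the Main Theorem the 6-diagonal squares are needed precisely for \emph{even} $n$, where Lemma \ref{kdiagonal.n.even} writes even $k$ as $4a+6b$; for even $n$ no 3-diagonal square exists at all, since $k=3$ would force the non-integer magic sum $3(3n-1)/2$. So your shortcut settles the dispensable case and leaves untouched the case the theorem is actually for, which only the explicit construction can handle.
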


\begin{proof}
Let $n\geq 6$.
We fill six consecutive diagonals of an empty array $M$ of order $n$ as follows:

\noindent Diagonal 1: $ \begin{array}{lll}
(i,i+1;i)& \mbox{for} & 0\leq i\leq n-1.
\end{array}$

\noindent Diagonal 2: $ \begin{array}{lll}
(n-1+i,i+1;2n-1-i)& \mbox{for} & 0\leq i\leq n-1.
\end{array}$

\noindent Diagonal 3: $ \begin{array}{lll}
(i,i+3;2n+i)& \mbox{for} & 0\leq i\leq n-1.
\end{array}$

\noindent Diagonal 4: $ \begin{array}{lll}
(n-1+i,i+3;4n-1-i)& \mbox{for} & 0\leq i\leq n-1.
\end{array}$

\noindent Diagonal 5: $ \begin{array}{lll}
(i,i+5,;6n-2-2i)& \mbox{for} & 0\leq i\leq n-1.
\end{array}$

\noindent Diagonal 6: $ \begin{array}{lll}
(n-1+i,i+5;4n+1+2i)& \mbox{for} & 0\leq i\leq n-1.
\end{array}$

\noindent
Addition in first and second components of each triple $(i,j;e)$ is modulo $n$.
(See Figure \ref{10.6} shows a 6-diagonal magic square of order 10.)

Note that the entries in diagonal $d$, $1\leq d\leq 4$, are $\{n(d-1),n(d-1)+1,\ldots,nd-1\}$.
The entries in diagonal 5 are $\{4n,4n+2,4n+4,\ldots,6n-2\}$ and the entries in diagonal 6 are
$\{4n+1,4n+3,4n+5,\ldots,6n-1\}$.

The row sum for row $n-1$ is $n-1+2n-1+3n-1+4n-1+4n+4n+1=18n-3$ and for row $i$, where $0\leq i \leq n-2$, is
$$(i)+(2n-i-2)+(2n+i)+(4n-i-2)+(6n-2i-2)+(4n+2i+3)=18n-3.$$

The column sum for column $j$, where $5\leq j\leq n-1$, is
$$(6n-2j+8)+(4n-j+2)+(2n+j-3)+(2n-j)+(j-1)+(4n+2j-9)=18n-3.$$
It is straightforward to see that the column sum for column $0\leq j\leq 4$ is also $18n-3$.
This completes the proof.

\end{proof}

\begin{figure}[ht]
$$\begin{array}{|c|c|c|c|c|c|c|c|c|c|}\hline
&0&18&20&38&58&43&&&\\ \hline
&&1&17&21&37&56&45&&\\ \hline
&&&2&16&22&36&54&47&\\ \hline
&&&&3&15&23&35&52&49\\ \hline
51&&&&&4&14&24&34&50\\ \hline
48&53&&&&&5&13&25&33\\ \hline
32&46&55&&&&&6&12&26\\ \hline
27&31&44&57&&&&&7&11\\ \hline
10&28&30&42&59&&&&&8\\ \hline
9&19&29&39&40&41&&&&\\ \hline
\end{array}$$
\caption{A 6-diagonal magic square of order 10}
\label{10.6}
\end{figure}

\section{Main Theorem}\label{SEC3}

The following two lemmas are crucial for the construction of a $k$-diagonal magic square of order $n$.

\begin{lemma}\label{shiftlemma}
{\rm Let $M$ be a $k$-diagonal magic square of order $n$.
\begin{itemize}
\item [(i)] Let $2k\leq n$. If we shift every $(i,j;e)\in M$ to $(i+k,j;e)$ (or to $(i,j+k;e)$),
addition is modulo $n$, the resulting square is a $k$-diagonal magic square of order $n$.

\item [(ii)] If we add $m$ to each nonempty cell of $M$, the resulting square has row sum and column sum
$km+k(nk-1)/2$ and its entries are $m,m+1,\ldots,mk-1+m.$
\end{itemize}
}
\end{lemma}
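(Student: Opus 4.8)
The plan is to treat the two parts separately, since each reduces to the fact that the operation involved — a cyclic shift of rows or columns in (i), and adding a constant to every filled cell in (ii) — is structurally simple and interacts transparently with the five defining properties of a $k$-diagonal magic square.

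For part (i), I would first record that sending every triple $(i,j;e)$ to $(i+k,j;e)$ with indices mod $n$ is nothing more than the cyclic permutation of rows $i\mapsto i+k$ applied to $M$, and is therefore a bijection of the $n\times n$ cell set that fixes the column of each entry. Consequently filled cells map to filled cells, each of $0,1,\ldots,nk-1$ still occurs exactly once, and each row and column still contains exactly $k$ filled cells. The magic condition is preserved for an even simpler reason: old row $i$ becomes new row $i+k$ carrying the identical multiset of entries, so the row sums are merely permuted among the rows and each remains the common magic sum; and since no entry changes column, every column retains exactly the same entries and hence the same sum. The column-shift version $j\mapsto j+k$ is handled identically with the roles of rows and columns exchanged.

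It then remains to check the diagonal condition, which is the only place the hypothesis $2k\le n$ enters. Indexing diagonals by the residue $j-i \pmod n$, the entries of $M$ lie on some $k$ consecutive diagonals $d_0,d_0+1,\ldots,d_0+k-1$. The row shift sends a cell on diagonal $d=j-i$ to one on diagonal $j-(i+k)=d-k \pmod n$, so the image occupies the $k$ consecutive diagonals $d_0-k,\ldots,d_0-1$ (and $d_0+k,\ldots,d_0+2k-1$ for the column shift); in either case this is again a block of $k$ consecutive, possibly broken, diagonals, so the shifted array is a $k$-diagonal magic square. The condition $2k\le n$ guarantees moreover that this new block is disjoint from the original one, which is exactly what makes the shift useful for assembling squares on more diagonals, so I would state it in that role.

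For part (ii), I would first compute the magic sum of $M$: the entries $0,1,\ldots,nk-1$ have total $\tfrac{nk(nk-1)}{2}$, spread equally over the $n$ rows, so each row (and likewise each column) sums to $\tfrac{k(nk-1)}{2}$, agreeing with the value recorded in Section~\ref{SEC2}. Adding $m$ to each of the $k$ filled cells of a given row raises that row's sum by exactly $km$, giving $km+\tfrac{k(nk-1)}{2}$, and the same holds for columns; meanwhile the entry set $\{0,1,\ldots,nk-1\}$ becomes $\{m,m+1,\ldots,m+nk-1\}$ (the printed upper endpoint $mk-1+m$ should read $nk-1+m$). No step here presents a genuine obstacle: the shift in (i) is a bijection and so automatically preserves the counting and distinctness conditions, while adding a constant is linear and so affects each line sum additively. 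The only item requiring real care is the bookkeeping of the diagonal index $j-i$ under the shift, and confirming that a block of $k$ consecutive residues mod $n$ maps to another such block; I would verify this once for the row shift and invoke symmetry for the column case.
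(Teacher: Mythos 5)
Your proof is correct and follows essentially the same route as the paper's, which simply observes that the shift permutes rows (or columns) without changing their contents and declares part (ii) trivial; you have merely supplied the details the paper leaves implicit, including the diagonal bookkeeping via $j-i \pmod n$. Your two side remarks are also accurate: the stated upper endpoint $mk-1+m$ in part (ii) is a typo for $nk-1+m$, and the hypothesis $2k\leq n$ is not needed for the shifted array to be $k$-diagonal but rather guarantees the new diagonal block is disjoint from the old one, which is how the lemma is used in Lemma~\ref{ellplusklemma}.
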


\begin{proof}
For Part (i) note that the entries in each row or each column will be the same after the shift. So the resulting array is a $k$-diagonal magic square of order $n$. The proof of Part (ii) is trivial.
\end{proof}

\begin{lemma}\label{ellplusklemma}
{\rm If there exist $\ell$-diagonal and  $m$-diagonal magic squares of order $n$ and $\ell+m\leq n$,
then there exists an $(\ell+m)$-diagonal magic square of order $n$.
}
\end{lemma}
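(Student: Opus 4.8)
The plan is to build the $(\ell+m)$-diagonal square by superimposing the two given squares on disjoint blocks of diagonals, after relabelling the entries of one of them so that the two entry sets tile $\{0,1,\ldots,(\ell+m)n-1\}$ without repetition. Let $A$ be an $\ell$-diagonal magic square of order $n$ (entries $0,\ldots,\ell n-1$) and let $B$ be an $m$-diagonal magic square of order $n$ (entries $0,\ldots,mn-1$). First I would apply Lemma~\ref{shiftlemma}(ii) to $B$ with additive constant $\ell n$, producing a square $B'$ whose entries are exactly $\ell n,\ell n+1,\ldots,(\ell+m)n-1$ and whose common row and column sum is $m\ell n+m(mn-1)/2$. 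Together with the entries $0,\ldots,\ell n-1$ of $A$, the combined entry set is precisely $\{0,1,\ldots,(\ell+m)n-1\}$, each value used once.

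Next I would position the two squares on complementary diagonals. A cyclic shift of all columns by a fixed amount permutes the columns and leaves every row multiset unchanged, so by the argument of Lemma~\ref{shiftlemma}(i) it carries a $k$-diagonal magic square to another one whose occupied diagonals are shifted by the same amount. Using such shifts I may assume that $A$ occupies diagonals $0,1,\ldots,\ell-1$ and that $B'$ occupies diagonals $\ell,\ell+1,\ldots,\ell+m-1$, where a cell $(i,j)$ lies on diagonal $j-i \pmod n$. Because $\ell+m\le n$, these $\ell+m$ diagonal indices are pairwise distinct, so the filled cells of $A$ and of $B'$ are disjoint and their union lies on $\ell+m$ genuinely consecutive diagonals. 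Overlaying $A$ and $B'$ therefore defines a single array $M$ in which no cell receives two entries.

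It then remains to check that $M$ is an $(\ell+m)$-diagonal magic square. Each row meets $\ell$ cells of $A$ and $m$ cells of $B'$, all distinct, giving exactly $\ell+m$ filled cells per row, and likewise per column; the entry count was arranged above. For the magic sum, the row sum of $M$ is the sum of the row sums of $A$ and $B'$, and I would verify the identity
$$\frac{\ell(\ell n-1)}{2}+m\ell n+\frac{m(mn-1)}{2}=\frac{(\ell+m)\big((\ell+m)n-1\big)}{2},$$
whose right-hand side is the forced magic sum $k(kn-1)/2$ for $k=\ell+m$ noted in Section~\ref{SEC2}; the column sums agree for the same reason, since in a magic square row and column sums coincide.

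The only genuinely delicate point is the alignment step: I must be sure the two diagonal blocks can be made disjoint \emph{and} consecutive at the same time. This is exactly where the hypothesis $\ell+m\le n$ is essential, as it is what prevents the shifted block of $B'$ from wrapping around and colliding with $A$. The choice of the additive constant $\ell n$ in Lemma~\ref{shiftlemma}(ii) is the second load-bearing decision: it is forced by the requirement that the combined row sum collapse to $(\ell+m)((\ell+m)n-1)/2$, and the displayed identity confirms that this single value works simultaneously for every row and column.
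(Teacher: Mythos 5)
Your proposal is correct and follows essentially the same route as the paper: shift one square (Lemma~\ref{shiftlemma}(i)) so the two occupy disjoint, consecutive diagonals, add $\ell n$ to the entries of the $m$-diagonal square (Lemma~\ref{shiftlemma}(ii)), and superimpose. Your explicit verification of the identity $\ell(\ell n-1)/2+m\ell n+m(mn-1)/2=(\ell+m)\bigl((\ell+m)n-1\bigr)/2$ merely spells out what the paper leaves implicit in its citation of Part~(ii).
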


\begin{proof}
Let $A$ and $B$ be $\ell$-diagonal and $m$-diagonal magic squares of order $n$, respectively.
Since $\ell+m\leq n$, without loss of generality, by Part (i) of Lemma \ref{shiftlemma}, we can assume if a cell $(i,j)$ of $A$ is filled, the cell $(i,j)$ of $B$ is empty. In addition, if we superimpose $A$ and $B$, we will have
$\ell+m$ consecutive diagonals.
Now we form a magic square $C$ of order $n$ with $\ell+m$ entries in each row and in each column as follows: If $(i,j;e)\in A$, then $(i,j;e)\in C$ and if $(i,j;e)\in B$, then $(i,j;e+\ell n)\in C$. Then $C$ is an $(\ell+m)$-diagonal magic square of order $n$ by Part (ii) of Lemma \ref{shiftlemma}.
\end{proof}

Figure \ref{9.7} shows a 7-diagonal magic square of order 9 obtained by the construction given in
Lemma \ref{ellplusklemma} and the 3-diagonal and 4-diagonal magic squares given in Figures \ref{9.3}
and \ref{9.4}, respectively.

\begin{figure}[ht]
$$\begin{array}{|c|c|c|c|c|c|c|c|c|c|}\hline
27&	62&	51&	38&	  &	  &	4 &	18&	17 \\ \hline
12&	28&	61&	50&	39&	  &	  &	8 &	19 \\ \hline
20&	16&	29&	60&	49&	40&	  &	  &	3   \\ \hline
7 &	21&	11&	30&	59&	48&	41&	  &	   \\ \hline
  &	2 &	22&	15&	31&	58&	47&	42&	   \\ \hline
  &	  &	6 &	23&	10&	32&	57&	46&	43  \\ \hline
44&	  &   	  &	1 &	24&	14&	33&	56&	45  \\ \hline
53&	36&	  &	  &	5 &	25&	9 &	34&	55  \\ \hline
54&	52&	37&	  &	  &	0 &	26&	13&	35  \\ \hline
\end{array}$$
\caption{A 7-diagonal magic square of order 9}
\label{9.7}
\end{figure}

\begin{lemma}\label{kdiagonal.n.odd}
{\rm
Let $n$ be odd and $3\leq k\leq n$. Then there exists a $k$-diagonal magic square of order $n$.
}
\end{lemma}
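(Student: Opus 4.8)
The plan is to reduce the general statement to the direct constructions of Section~\ref{SEC2} together with the superposition process of Lemma~\ref{ellplusklemma}. Since $n$ is odd, all of the small-$k$ constructions are legal and available: Theorems~\ref{ThreefilledCells} and~\ref{FivefilledCells} supply $3$- and $5$-diagonal magic squares of odd order, while Theorems~\ref{FourfilledCells} and~\ref{SixfilledCells} supply $4$- and $6$-diagonal magic squares of any order. Because we are given $3\le k\le n$, the minimum-order hypotheses of these theorems ($n\ge 3,4,5,6$) are automatically met whenever the corresponding summand is used. My first move would therefore be to dispose of the small cases $k\in\{3,4,5\}$ directly by Theorems~\ref{ThreefilledCells}, \ref{FourfilledCells} and~\ref{FivefilledCells}; it is precisely the oddness of $n$ that makes the odd cases $k=3$ and $k=5$ admissible.

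For $k\ge 6$ I would decompose $k$ as a nonnegative integer combination of $3$'s and $4$'s. The key arithmetic observation is that $5$ is the Frobenius number of $\{3,4\}$, so every integer $k\ge 6$ can be written as $k=3a+4b$ with $a,b$ nonnegative integers; the only value in the range $k\ge 3$ that fails to be so representable is $k=5$, which is exactly the case already handled directly. Having fixed such a representation, I would build the $k$-diagonal square one summand at a time: starting from a $3$- or $4$-diagonal magic square of order $n$, I repeatedly apply Lemma~\ref{ellplusklemma}, each time superimposing one further $3$- or $4$-diagonal magic square of order $n$ onto the square constructed so far. After $a$ applications using the summand $3$ and $b$ using the summand $4$, the accumulated square has exactly $k$ filled diagonals, and by Lemma~\ref{ellplusklemma} it is a $k$-diagonal magic square of order $n$.

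The one hypothesis that must be verified at each application of Lemma~\ref{ellplusklemma} is the inequality $\ell+m\le n$, where $\ell$ is the number of diagonals accumulated so far and $m\in\{3,4\}$ is the summand being added. Since every partial sum is at most the final total, we have $\ell+m\le k\le n$ at every step, so the hypothesis holds throughout. This is the only point in the argument that uses the bound $k\le n$, and checking it is the main (though entirely routine) obstacle, amounting to the remark that the running count of diagonals never exceeds $k$. This completes the reduction and hence the proof.
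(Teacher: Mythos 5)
Your proof is correct and takes essentially the same approach as the paper: the paper writes $k=3a+4b+5c$ with nonnegative integers $a,b,c$ and superimposes the corresponding $3$-, $4$- and $5$-diagonal squares of order $n$ via Lemma~\ref{ellplusklemma}, which is exactly your argument with the decomposition $k=3a+4b$ for $k\ge 6$ plus the base cases $k\in\{3,4,5\}$. If anything, your write-up is slightly more careful, since you explicitly verify the hypothesis $\ell+m\le n$ at every application of Lemma~\ref{ellplusklemma}, a check the paper leaves implicit.
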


\begin{proof}
It is easy to see that there are nonnegative integers $a,b,c$ such $k=3a+4b+5c$. By Theorems
\ref{ThreefilledCells}, \ref{FourfilledCells} and \ref{FivefilledCells}, there are $3$-diagonal, $4$-diagonal and $5$-diagonal magic squares of order $n$. Apply Lemma \ref{ellplusklemma} to obtain a $k$-diagonal magic square of order $n$.
\end{proof}

\begin{lemma}\label{kdiagonal.n.even}
{\rm
Let $n,k$ be even and $4\leq k\leq n$. Then there exists a $k$-diagonal magic square of order $n$.
}
\end{lemma}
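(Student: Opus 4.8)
The plan is to mirror the odd case (Lemma \ref{kdiagonal.n.odd}), but using only the building blocks whose direct constructions remain valid when $n$ is even. Since $n$ is even, the $3$-diagonal and $5$-diagonal constructions of Theorems \ref{ThreefilledCells} and \ref{FivefilledCells} are unavailable (both require $n$ odd); the only direct constructions at our disposal are the $4$-diagonal square of Theorem \ref{FourfilledCells}, valid for all $n\geq 4$, and the $6$-diagonal square of Theorem \ref{SixfilledCells}, valid for all $n\geq 6$. The key arithmetic observation is that every even integer $k\geq 4$ can be written as $k=4a+6b$ with $a,b$ nonnegative integers: the numerical semigroup generated by $4$ and $6$ consists of $0$ together with all even integers $\geq 4$ (only $2$ is omitted). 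This is exactly the set of even values of $k$ that the lemma asks us to realize.

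First I would fix such a representation $k=4a+6b$. Then I would assemble the $k$-diagonal square by repeated application of Lemma \ref{ellplusklemma}, starting from a single $4$- or $6$-diagonal square and adjoining one block of size $4$ or $6$ at a time until $a$ fours and $b$ sixes have been used. At the stage where the running total of diagonals is $s$ and we adjoin a block of size $t\in\{4,6\}$, Lemma \ref{ellplusklemma} requires an $s$-diagonal and a $t$-diagonal magic square of order $n$ together with $s+t\leq n$. The first is the partially built square, the second is supplied by Theorem \ref{FourfilledCells} or Theorem \ref{SixfilledCells}, and the inequality holds because $s+t\leq k\leq n$ throughout the process.

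I would also record the existence conditions for the two direct blocks. Whenever a $6$-block is used we have $b\geq 1$, hence $k\geq 6$ and therefore $n\geq k\geq 6$, so Theorem \ref{SixfilledCells} applies; and since $k\geq 4$ we always have $n\geq 4$, so Theorem \ref{FourfilledCells} applies. The base case $k=4$ (where $a=1$, $b=0$ and no combining is needed) is simply Theorem \ref{FourfilledCells} itself.

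I do not expect a genuine obstacle here, as all the structural work is already carried out in Lemmas \ref{shiftlemma} and \ref{ellplusklemma}. The only point requiring care is bookkeeping: confirming that the semigroup representation $k=4a+6b$ covers precisely the even integers $k\geq 4$, and verifying that the hypothesis $s+t\leq n$ of Lemma \ref{ellplusklemma} holds at every combination step (which follows from $k\leq n$). This step, though routine, is the one most likely to conceal an off-by-one error or an overlooked small case such as $k=n=4$.
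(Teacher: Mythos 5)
Your proposal is correct and follows essentially the same route as the paper: the paper's proof also writes $k=4a+6b$ with $a,b$ nonnegative, invokes Theorems \ref{FourfilledCells} and \ref{SixfilledCells} for the $4$- and $6$-diagonal building blocks, and combines them via Lemma \ref{ellplusklemma}. Your version merely spells out the iteration and the bookkeeping (the semigroup fact and the check that $s+t\leq k\leq n$ at each step) that the paper leaves implicit.
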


\begin{proof}
It is easy to see that there are nonnegative integers $a$ and $b$ such that $k=4a+6b$. By Theorems
\ref{FourfilledCells} and \ref{SixfilledCells}, there exist $4$-diagonal and $6$-diagonal magic squares of order $n$. Apply Lemma \ref{ellplusklemma} to obtain a $k$-diagonal magic square of order $n$.
\end{proof}

We are now ready to state the main theorem of this paper. Recall that the magic sum of an $(n;k)$ magic square
is $k(nk-1)/2$. Hence, if $n$ is even and $k$ is odd, there is no $(n;k)$ magic square.
\vspace{4mm}

\noindent {\bf Main Theorem}\quad
There exists a $k$-diagonal magic square of order $n$ if and only if $n=k=1$ or
$3\leq k\leq n$ and $n$ is odd or $k$ is even.


\end{document}